\newcommand{\K}{\ensuremath{\mathbb{K}}}
\newcommand{\C}{\ensuremath{\mathbb{C}}}
\newcommand{\N}{\ensuremath{\mathbb{N}}}
\newcommand{\Z}{\ensuremath{\mathbb{Z}}}
\newcommand{\CP}{\ensuremath{\mathbb{CP}}}
\newcommand{\coh}{\ensuremath{\mathrm{H}}}
\newcommand{\SU}{\ensuremath{\mathrm{SU}}}
\newcommand{\T}{\ensuremath{\mathrm{T}}}
\newcommand{\Sph}{\ensuremath{\mathrm{S}}}
\newcommand{\tensor}{\ensuremath{\otimes}}
\newcommand{\skpr}[2]{\left\langle #1, #2 \right\rangle}
\newcommand{\norm}[1]{\left\|#1\right\|}
\newcommand{\lie}[1]{\mathfrak{#1}}
\newcommand{\ad}{\ensuremath{\mathrm{ad}}}
\DeclareMathOperator{\Ric}{Ric}
\DeclareMathOperator{\diam}{diam} 
\newtheoremstyle{thm}
{0.6cm}
{0.6cm}
{\itshape}
{}
{\bfseries}
{.}
{0.5em}
{}
\theoremstyle{thm}
\newtheorem{thm}{Theorem}	
\newtheorem{prop}[thm]{Proposition}
\newtheorem{lem}[thm]{Lemma}
\newtheorem{question}[thm]{Question}
\newtheorem*{defi*}{Definition}
\numberwithin{equation}{section}
\let\stdthebibliography\thebibliography
\let\stdendthebibliography\endthebibliography
\title[Homogeneous Spaces, Curvature and Cohomology]{Homogeneous Spaces, Curvature and Cohomology}
\author{Martin Herrmann}
\email{martin.herrmann@uni-muenster.de}
\address{Martin Herrmann \\Fakult\"at f\"ur Mathematik \\Karlsruher Institut f\"ur Technologie \\Kaiserstra\ss{}e 89--93 \\76133 Karlsruhe, Germany}
\curraddr{Martin Herrmann \\ Westf\"alische Wilhelms-Universit\"at M\"unster\\Mathematisches Institut \\Einsteinstra\ss{}e 62 \\48149 Münster, Germany}
\subjclass[2010]{Primary 53C20; Secondary 53C30}
\keywords{Nonnegative curvature, homogeneous spaces, almost nonnegative curvature operator}
\begin{document}
\begin{abstract}
We give new counterexamples to a question of Karsten Grove, whether there are only finitely many rational homotopy types among simply connected manifolds satisfying the assumptions of Gromov's Betti number theorem. Our counterexamples are homogeneous Riemannian manifolds, in contrast to previous ones. They consist of two families in dimensions 13 and 22. Both families are nonnegatively curved with an additional upper curvature bound and differ already by the ring structure of their cohomology rings with complex coefficients. The 22-dimensional examples also admit almost nonnegative curvature operator with respect to homogeneous metrics. 
\end{abstract}
\maketitle

\section{Introduction}
In this paper we give new counterexamples to a question raised by Grove  \cite{Grove1993} whether there are only finitely many rational  homotopy types of closed, simply connected manifolds with a given  lower bound  for the sectional curvature and a given upper bound on the diameter. Note that if in this question one replaces rational homotopy type by (integral) homotopy type, then the Aloff-Wallach spaces \cite{Aloff-Wallach} give rise to counterexamples.

Counterexamples to Grove's questions have been constructed by Fang and Rong  \cite{FangRong} and Totaro  \cite{Totaro}. All these examples actually already differ by the ring structure of their rational cohomology rings and, except for Totaro's six dimensional family, each family has  a uniform upper bound on the sectional curvature. Totaro's examples in dimension 6 and 9 have the merit of being nonnegatively curved, while the examples of Fang and Rong differ by their  cohomology rings with complex coefficients. 

We show that there are counterexamples to Grove's question with an additional upper curvature bound among nonnegatively curved homogeneous spaces in dimensions 13 and $\geq 15$, which differ by their complex cohomology rings. We also show that in dimensions $22$ and $\geq 24$ there are examples which in addition have almost nonnegative curvature operator with respect to homogeneous metrics.

Note that of Totaro's examples only finitely many can be homogeneous, as one can see from the classification of closed, simply connected, homogeneous spaces  up to dimension 9 carried out by Klaus in his diploma thesis \cite{Klaus}. In the nine-dimensional case, from the classification of Klaus one sees that, in order to be homogeneous, Totaro's nine-dimensional examples would have to be principal circle bundles over the product $(\Sph^2)^4=\Sph^2\times \Sph^2 \times \Sph^2\times \Sph^2$. These fall into  finitely many rational homotopy types.

\begin{thm}\label{MainTheorem13dimensional}
For $n=13$ and $n\geq 15$ there exist $C,D>0$ and an infinite family of closed, simply connected, homogeneous $n$--manifolds $M_a$ with normal homogeneous metrics, such that their cohomology rings $\coh^*(M_a;\C)$ with complex coefficients are pairwise non-isomorphic,  their sectional curvature satisfies  $0\leq \sec_{M_a}\leq C$ and  their diameter $\diam(M_a)\leq D$.
\end{thm}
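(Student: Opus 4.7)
The approach is to realize each $M_a$ as a normal homogeneous quotient $G/H_a$ of a single compact simply connected Lie group $G$ by a connected subgroup depending on integer parameters, read off the complex cohomology from a small-codimension torus bundle description, and use arithmetic of the resulting Euler class to separate infinitely many isomorphism classes. A natural candidate in dimension $13$ is $G = \SU(2)^6$ with $H_a$ a codimension-one subtorus of the maximal torus $T^6 \subset G$, parametrized by vectors $a \in \Z^6$ with $\gcd(a)=1$; then $M_a$ is simply connected (since $H_a$ is connected and $G$ is simply connected) and realizes as a principal circle bundle over the fixed base $B = (\Sph^2)^6 = G/T^6$.

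For the geometric bounds I would fix a bi-invariant metric $\langle\cdot,\cdot\rangle$ on $G$ and equip each $M_a$ with the induced normal homogeneous metric. O'Neill's formula, applied to the Riemannian submersion $(G,\langle\cdot,\cdot\rangle) \to M_a$, yields $\sec_{M_a} \geq 0$ (since $\sec_G \geq 0$) and $\sec_{M_a} \leq C$ for a constant $C$ depending only on $\langle\cdot,\cdot\rangle$ and not on $a$. Similarly $\diam(M_a) \leq \diam(G, \langle\cdot,\cdot\rangle) =: D$. Thus a single metric on $G$ provides the uniform bounds for the whole family.

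The main obstacle is distinguishing the rings $\coh^*(M_a;\C)$. Using the Leray--Serre spectral sequence for $\Sph^1 \to M_a \to B$, the cohomology of $M_a$ is determined by the Euler class $e_a = \sum_{i=1}^{6} a_i x_i \in \coh^2(B;\C)$, where the $x_i$ are the pullbacks of the generators of the $\Sph^2$ factors. Concretely, the even-degree part of $\coh^*(M_a;\C)$ is the quotient ring $A_a := \C[x_1,\ldots,x_6]/(x_1^2,\ldots,x_6^2,\, e_a)$, and the full ring structure is reconstructed from $A_a$ together with its Poincar\'e dual module (which encodes the odd part). To produce infinitely many pairwise non-isomorphic rings I would attach to $A_a$ a projective invariant sensitive to the $a_i$, for instance suitable $\mathrm{GL}$-invariants of the cubic multiplication form on $A_a^2$, and verify that this invariant takes infinitely many values as $a$ ranges over a sublattice of $\Z^6$, modulo the Weyl-type symmetries (permutations and sign changes of coordinates) of the problem. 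This arithmetic step, analogous to those used by Fang--Rong and Totaro, is where the real work lies.

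Finally, for $n \geq 15$ take $M_a \times \Sph^{n-13}$ with the product normal homogeneous metric; since $n-13 \geq 2$ the product is simply connected, inherits the curvature and diameter bounds up to additive constants, and the K\"unneth formula preserves pairwise non-isomorphism of the complex cohomology rings. The case $n = 14$ is excluded because multiplying by $\Sph^1$ would destroy simple connectedness.
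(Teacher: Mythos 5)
There is a fatal gap in the cohomological step. With $B=(\Sph^2)^6$ and $\coh^*(B;\C)=\C[x_1,\dots,x_6]/(x_1^2,\dots,x_6^2)$, the full graded automorphism group of this ring acting on degree $2$ is the group of monomial matrices $(\C^*)^6\rtimes S_6$: requiring $\varphi(x_i)^2=0$ forces each column of the matrix of $\varphi|_{\coh^2}$ to have a single nonzero entry. In particular, if $a=(a_1,\dots,a_6)$ and $a'=(a_1',\dots,a_6')$ both have all coordinates nonzero, the diagonal automorphism $x_i\mapsto (a_i'/a_i)\,x_i$ sends $e_a=\sum a_i x_i$ to $e_{a'}$, hence induces a graded ring isomorphism $\coh^*(B;\C)/(e_a)\to \coh^*(B;\C)/(e_{a'})$ and, together with the induced isomorphism on annihilators, an isomorphism $\coh^*(M_a;\C)\cong\coh^*(M_{a'};\C)$. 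So your family produces only finitely many isomorphism types of complex cohomology rings (essentially classified by the support of $a$ up to permutation), and no $\mathrm{GL}$-invariant of the cubic form on $A_a^2$ can separate them: those invariants are constant on this family. The geometric part of your argument (O'Neill for normal homogeneous submersions giving uniform $0\le\sec\le C$ and $\diam\le\diam(G)$, and products with spheres for $n\ge 15$) is fine, but the choice of base is too symmetric.

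The paper avoids this by taking $B=(\SU(3)/\T^2)\times(\SU(3)/\T^2)$ and $M_a$ the circle bundle with $c_1(M_a)=ax_1+y_1-y_2$. The point is that $\coh^*(\SU(3)/\T^2;\C)=\C[x,y]/(x^2+xy+y^2,\,x^2y+xy^2)$ is cohomologically rigid: the degree-$2$ classes $\omega$ with $\omega^3=0$ form exactly the three lines $\C x\cup\C y\cup\C(x+y)$, so any graded automorphism must permute these lines, and the automorphism group in degree $2$ is only $\C^*\times S_3$ rather than $\mathrm{GL}_2(\C)$. This makes the bundle parameter $a$ detectable. If you want to salvage an $\SU(2)$-type construction you would need a base whose cohomology has comparably few degree-$2$ automorphisms; products of $\Sph^2$ do not qualify.
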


Bounds on the curvature operator of a manifold are much stronger than bounds on the sectional curvature. For instance, the only closed, simply connected manifolds with positive curvature operator are spheres by B\"ohm and Wilking \cite{BW08}. This also finished the classification of simply connected manifolds with nonnegative curvature, see e.g.\ \cite{ChowLuNi06}. In particular, in a fixed dimension there are only finitely many diffeomorphism types of closed, simply connected manifolds with nonnegative curvature operator and Grove's question has a positive answer, if one replaces the lower sectional curvature bound by non-negativity of the curvature operator.

Recall that a closed manifold $M$ has \emph{almost nonnegative curvature operator} if for every $\varepsilon>0$ there is a Riemannian metric $g_\varepsilon$ on $M$ whose curvature operator $\hat{R}_{g_\varepsilon}$ satisfies
\[\hat{R}_{g_\varepsilon} (\diam(M,g_\varepsilon))^2>-\varepsilon.\]
We say that for a family $(g_t)_{t\in (0,1]}$ on $M$ the family $(M,g_t)$ has almost nonnegative curvature operator as $t\to 0$, if $\lim_{t\to 0}\lambda_\text{min}(\hat{R}_{g_t})(\diam(M,g_t))^2\geq 0$, where $\lambda_\text{min}(\hat{R}_{g_t})$ is the smallest eigenvalue of $\hat{R}_{g_t}$.  

In \cite{HST13} first examples of closed, simply connected manifolds with almost nonnegative curvature that do not admit a metric of nonnegative curvature operator were constructed. Furthermore, also in \cite{HST13} it was shown that there is an infinites family of closed, simply connected 6-manifolds which admit almost nonnegative curvature operator and have distinct (integral) homotopy types.

By looking at principal torus bundles over $(\CP^2)^5$ and their products with spheres we prove the following theorem.
\begin{thm}\label{MainTheoremANCO}
For $n=22$ and $n\geq 24$ there exist $C,D>0$ and infinitely many homogeneous, closed, simply connected $n$--manifolds $E_\alpha$ with pairwise non-isomorphic complex cohomology rings, such that on every $E_\alpha$ there exists a family $g_t$, $t\in (0,1]$, of homogeneous Riemannian metrics with
\begin{itemize}
\item$0\leq \sec_{g_1}\leq C$, $\Ric_{g_1}>0$ and $\diam_{g_1}\leq D$;
\item$0\leq \sec_{g_t}$, $\Ric_{g_t}>0$ and $\diam_{g_t}\leq D$ for every $t\in (0,1]$ and $(E_\alpha,g_t)$ has almost nonnegative curvature operator as $t \to 0$. 
\end{itemize}\end{thm}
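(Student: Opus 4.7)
The plan is to follow the template of \cite{HST13}, taking principal $\T^2$-bundles over $(\CP^2)^5$ in place of the principal torus bundles over $(\Sph^2)^k$ used there. Write $(\CP^2)^5=G/H$ with $G=\SU(3)^5$ and $H=\mathrm{S}(\mathrm{U}(2)\times\mathrm{U}(1))^5$. For every surjective integer matrix $A\colon H/[H,H]\cong\Z^5\to\Z^2$ I define $K_A\subset H$ to be the kernel of $H\twoheadrightarrow H/[H,H]\xrightarrow{A}\T^2$ and set $E_A:=G/K_A$; this is a closed, simply connected, homogeneous $22$-manifold (simple connectivity uses the surjectivity of $A$ in the long exact sequence of $\T^2\to E_A\to (\CP^2)^5$), and the bundle projection $E_A\to(\CP^2)^5$ has Chern classes determined by $A$. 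The Serre spectral sequence collapses at $E_3$ for parity reasons and identifies $\coh^*(E_A;\C)$, up to a fixed exterior factor in odd degrees, with the Artinian graded quotient
\[
\C[u_1,\dots,u_5]/\bigl(u_1^3,\dots,u_5^3,\ell^A_1,\ell^A_2\bigr),
\]
where $\ell^A_1,\ell^A_2\in\C[u_1,\dots,u_5]_2$ are the two linear forms encoding the Chern classes. A numerical invariant of this graded $\C$-algebra, for instance an intersection number in top degree as in \cite{FangRong}, attains infinitely many distinct values as $A$ varies, giving pairwise non-isomorphic complex cohomology rings.

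For the metrics, fix a bi-invariant inner product $Q$ on $\lie g$, take the reductive decomposition $\lie g=\lie k_A\oplus\lie m_A$, and orthogonally split $\lie m_A=\lie h_A\oplus\lie v_A$ with $\lie v_A$ the lift of the vertical $\T^2$. The normal homogeneous metric $g_1$ on $E_A$ induced by $Q|_{\lie m_A}$ satisfies $0\leq\sec_{g_1}\leq C$ and $\diam_{g_1}\leq D$ with $C,D$ depending only on $(G,Q)$; since $G$ is semisimple and $K_A$ contains no normal subgroup of $G$, almost-effectiveness of the action combined with the standard Ricci formula for normal metrics yields $\Ric_{g_1}>0$. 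For $t\in(0,1]$ I build $g_t$ as the $K_A$-invariant deformation of $Q|_{\lie m_A}$ used in \cite{HST13}, chosen so that $E_A\to(\CP^2)^5$ remains a Riemannian submersion with totally geodesic flat fibers; O'Neill's formulas then give $0\leq\sec_{g_t}$, and $\Ric_{g_t}>0$ and $\diam_{g_t}\leq D$ hold uniformly in $t$ and $A$.

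The main obstacle is to verify that $(E_A,g_t)$ has almost nonnegative curvature operator as $t\to 0$, uniformly in $A$. I would adapt the argument of \cite{HST13} by writing $\hat R_{g_t}$ on $\Lambda^2\lie m_A$ block-decomposed along $\Lambda^2\lie h_A\oplus(\lie h_A\wedge\lie v_A)\oplus\Lambda^2\lie v_A$, expressing each block in terms of the (nonnegative) bi-invariant curvature operator of $(\lie g,Q)$, the Lie brackets, and $t$. The crucial input, transported from \cite{HST13} to the Lie-algebraic setting $\lie{su}(3)^{\oplus 5}$, is that the bivectors in $\Lambda^2\lie h_A$ on which $\hat R_{(\CP^2)^5}$ is negative have nontrivial bracket into $\lie v_A$; this produces positive off-diagonal couplings of order $t^{-2}$ which dominate the base's fixed negative contribution and force $\lambda_{\min}(\hat R_{g_t})\,\diam_{g_t}^2\to 0$ as $t\to 0$, with bound independent of $A$ since $Q$ and the admissible connection data on $\lie g$ range in a compact family. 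Finally, for $n\geq 24$ I take Riemannian products $E_A\times \Sph^{n-22}$ with round spheres; homogeneity, all curvature and diameter bounds, and almost nonnegativity of the curvature operator are preserved under products with a nonnegatively curved symmetric space, and Künneth guarantees pairwise non-isomorphism of the complex cohomology rings.
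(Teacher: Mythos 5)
Your overall scheme—principal $\T^2$-bundles over $(\CP^2)^5$ with a collapsing family of connection-type homogeneous metrics, ANCO via the mechanism of \cite{HST13}, products with spheres for higher $n$—is the same as the paper's. But two of the three essential steps are not actually carried out, and as sketched one of them would fail.

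The cohomology-ring step is the heart of the proof and you have essentially skipped it. Asserting that ``an intersection number in top degree, as in \cite{FangRong}, attains infinitely many distinct values as $A$ varies'' is not an argument: you must (i) exhibit a specific quantity that is invariant under graded $\C$-algebra isomorphisms of the Artinian quotients $\C[u_1,\dots,u_5]/(u_i^3,\ell_1^A,\ell_2^A)$, (ii) show it separates infinitely many $A$, and (iii) rule out non-diagonal isomorphisms (permutations of the $\CP^2$-factors, skew changes of basis in degree $2$, etc.) that could identify different $A$'s. It is not true that a generic pair $(\ell_1^A,\ell_2^A)$ works, and there is no off-the-shelf invariant that does this for you. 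The paper chooses the two Chern classes very carefully (namely $z_1+z_2+z_3+z_4$ and $z_1+\alpha z_3+z_5$ for $\alpha\in\N$) so that in degree $6$ the ring has a basis in which $\varphi(x_i)^3=0$ forces $\varphi$ to take one of finitely many shapes, and then the remaining relations force $\alpha=\tilde\alpha$; Lemma \ref{CohomologyRingsDifferent} is a genuine case analysis and cannot be replaced by a one-line appeal to intersection numbers.

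The curvature step is also gappy. You claim ``O'Neill's formulas then give $0\le\sec_{g_t}$'' because the fibers are totally geodesic and flat and the base is nonnegatively curved; but O'Neill only gives nonnegativity of horizontal, vertical and mixed $2$-planes, not of arbitrary $2$-planes, and a Riemannian submersion with totally geodesic flat fibers over a nonnegatively curved base does \emph{not} in general have a nonnegatively curved total space. The paper instead computes $\sec_{g_t}$ from the Lie-theoretic homogeneous formula (Lemma \ref{differenttermscurvature}), uses that $g_1$ is normal homogeneous (hence $\sec_{g_1}\ge 0$) to bound the mixed term $\skpr{[X_1,Y_2]}{[X_2,Y_1]}$, and then propagates nonnegativity to all $t\le 1$ (Proposition \ref{propNonnegativeSecPositiveRic}); the positive Ricci curvature for $t<1$ is obtained by the same comparison together with Berestovskij's theorem at $t=1$, not by a ``standard formula''. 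Finally, for the ANCO estimate the paper does not re-derive the block analysis of $\hat R_{g_t}$: it proves that $g_t$ is a connection metric for a fixed connection and base metric (Lemma \ref{metricsareconnectionmetrics}) and then invokes the main theorem of \cite{HST13} directly, which is both cleaner and avoids the imprecision in your sketch (note for instance that $(\CP^2)^5$ with the symmetric metric has nonnegative curvature operator, so there are no ``bivectors in $\Lambda^2\lie h_A$ on which $\hat R_{(\CP^2)^5}$ is negative''; the danger comes from the $t^{-2}$-scaled $A$-tensor terms of the collapsing fiber, not from the base).
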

The metric $g_1$ is in fact normal homogeneous with respect to the transitive $\SU(3)^5\times \T^2$ action on the $E_\alpha$. To our knowledge, this family gives the first examples of closed, simply connected manifolds in a fixed dimension with almost nonnegative curvature operator and infinitely many different rational homotopy types.

Note that in both theorems, the higher dimensional families are obtained from the families in dimensions 13 and 22 by taking products with spheres. By taking products with a circle, one can also obtain families in dimensions 14 and 23, respectively, that share the same properties, except for being simply connected. 

This paper is organized as follows. In Section \ref{SectionCohomologyRings} we will give the definition of the 22-dimensional examples and compute their cohomology rings. In Section \ref{SectionMetrics} we define the family of homogeneous metrics on the examples and estimate their curvature.  In Section \ref{SectionProof} we give the proof  of Theorem \ref{MainTheoremANCO}.  In Section \ref{Section13dimensional} we prove Theorem \ref{MainTheorem13dimensional}. Section \ref{SectionQuestions} contains some further remarks and questions.

Theorem \ref{MainTheoremANCO} was part of the author's dissertation \cite{MHDiss} at the Karls\-ruhe Institute of Technology. The author wishes to thank his advisor Wilderich Tuschmann for his constant help and advice and Anand Dessai for helpful discussions.

\section{Cohomology rings of the 22-dimensional examples}\label{SectionCohomologyRings}
In this section let $\K$ be a field of characteristic 0.

For nonzero $\alpha \in \N$ let $E_\alpha$ denote the principal $\T^2$-bundle obtained by first taking the principal $\Sph^1$-bundle $\tilde{E}$ over $(\CP^2)^5$ with first Chern class $z_1+z_2+z_3+z_4$ where $z_i$ is a generator of the integral cohomology of the $i$-th $\CP^2$ factor, and then the principal $\Sph^1$-bundle over $\tilde{E}$ with first Chern class $[z_1]+\alpha[z_3]+[z_5]$. Here $[z_i]$ denotes the equivalence class of $z_i$ in $\coh^2(\tilde{E};\Z)=\coh^2((\CP^2)^5;\Z)/(z_1+z_2+z_3+z_4)$.

Note that $E_\alpha$ is simply connected, since the Chern classes are no multiples in $\coh^2((\CP^2)^5;\Z)$ and $\coh^2(\tilde{E};\Z)$, respectively.

Let $x_i$ be the equivalence class of $z_i$ in $\coh^2(E_\alpha;\Z)$.  
\begin{lem}\label{Lemma22dimensionalCohomologyRings}
The $\K$-cohomology ring $\coh^{\leq 6}(E_\alpha;\K)$ of $E_\alpha$ in degrees $\leq 6$ is generated by $x_1$, $x_2$ and $x_3$ with relations
\[x_1^3,\; x_2^3,\; x_3^3,\]
\[ x_1^2x_2+ x_1^2 x_3 + x_1 x_2^2+2 x_1x_2x_3+x_1 x_3^2 +x_2^2x_3+x_2x_3^2,\]
and
\[  x_1^2x_3+\alpha\; x_1x_3^2.\]
\end{lem}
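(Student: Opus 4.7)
My plan is to compute $\coh^*(E_\alpha;\K)$ in low degrees via the Serre spectral sequence of the principal torus fibration $\T^2\to E_\alpha\to B$, with $B=(\CP^2)^5$. By construction, the two $\Sph^1$-factors contribute first Chern classes $c_1=z_1+z_2+z_3+z_4$ and $c_2=z_1+\alpha z_3+z_5$ (the latter being a lift to $B$ of $[z_1]+\alpha[z_3]+[z_5]\in\coh^2(\tilde E;\Z)$; different lifts differ by a multiple of $c_1$ and define the same bundle). The $E_2$-page is $\coh^*(B;\K)\otimes\Lambda(\xi_1,\xi_2)$ with transgressions $d_2(\xi_i)=c_i$. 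Since $\coh^*(B;\K)$ is concentrated in even degrees while $d_r$ changes parity for $r\geq 3$, all higher differentials vanish and $E_\infty=E_3$ is the Koszul cohomology of the pair $(c_1,c_2)$ acting on $\coh^*(B;\K)$.

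It remains to show that in total degree $n\leq 6$ only the Koszul-degree-$0$ contribution $\coh^n(B;\K)/(c_1,c_2)$ survives. The vanishing of Koszul $H^2$ in this range reduces to injectivity of multiplication by $c_1$ on $\coh^{\leq 4}(B;\K)$; since $c_1$ depends only on the first four factors, K\"unneth reduces this to injectivity on $\coh^{\leq 4}((\CP^2)^4;\K)$, where it follows from the hard Lefschetz theorem applied to $c_1$, the first Chern class of an ample line bundle on the smooth projective variety $(\CP^2)^4$. The vanishing of Koszul $H^1$ asks that, whenever $ac_1+bc_2=0$ in $\coh^{p+2}(B;\K)$ with $a,b\in\coh^p(B;\K)$ and $p\in\{0,2,4\}$, one has $(a,b)=(\lambda c_2,-\lambda c_1)$ for some $\lambda\in\coh^{p-2}(B;\K)$. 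For $p=0$ this is linear independence of $c_1,c_2$ in $\coh^2(B;\K)$; for $p=2,4$ I would verify it by expanding $a$ and $b$ in the monomial basis $\{z_{i_1}\cdots z_{i_k}\}$ and solving the resulting finite linear system. This is the main technical step.

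Given the collapse to $\coh^{\leq 6}(E_\alpha;\K)=\coh^{\leq 6}(B;\K)/(c_1,c_2)$, the stated presentation emerges from a change of generators. Since $c_1,c_2,z_1,z_2,z_3$ form a basis of $\coh^2(B;\K)$, one may solve
\[z_4=c_1-z_1-z_2-z_3,\qquad z_5=c_2-z_1-\alpha z_3.\]
Writing $x_i$ for the image of $z_i$ ($i=1,2,3$) in $\coh^2(E_\alpha;\K)$ and setting $c_1=c_2=0$, the relations $z_1^3=z_2^3=z_3^3=0$ give $x_1^3=x_2^3=x_3^3=0$, while $z_4^3=0$ and $z_5^3=0$ become $(x_1+x_2+x_3)^3=0$ and $(x_1+\alpha x_3)^3=0$. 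Expanding these modulo the cubes and dividing out the nonzero scalars $3$ and $3\alpha$ (using $\mathrm{char}(\K)=0$ and $\alpha\neq 0$) yields the two degree-$6$ relations listed in the lemma. The main obstacle throughout is the Koszul-$H^1$ step: hard Lefschetz handles $H^2$ cleanly, but since $c_2$ is not the Chern class of an ample line bundle on $B$, I know no similarly structural argument for $H^1$ and would fall back on the explicit monomial computation.
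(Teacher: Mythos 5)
Your proposal is correct and takes a somewhat different route than the paper. The paper iterates two $\Sph^1$-bundles, computing $\coh^*(\tilde E)$ explicitly first, then doing the second bundle; at each stage it verifies by direct computation that multiplication by the relevant Chern class is injective in degrees $\le 6$, and the degree-$6$ relations are extracted by hand from the image of these multiplication maps. You instead treat the two circles at once as a $\T^2$-bundle and frame the spectral sequence collapse as a Koszul cohomology computation for the pair $(c_1,c_2)$. The two versions have the same technical content — your Koszul $H^2$ vanishing is the paper's ``$\mu_{c_1}$ injective,'' and your Koszul $H^1$ vanishing is the paper's ``the same works for the second bundle,'' i.e.\ injectivity of $\mu_{c_2}$ on $\coh^*(B)/(c_1)$ — and both leave the second of these as an unwritten finite linear-algebra check. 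What your route buys is the hard Lefschetz reduction for the $H^2$ step (the paper just says ``a short computation''), and especially the change-of-basis observation $c_1,c_2,z_1,z_2,z_3$: this turns the derivation of the degree-$6$ relations into the transparent expansion of $(x_1+x_2+x_3)^3$ and $(x_1+\alpha x_3)^3$ modulo $x_i^3$, which is considerably cleaner and more illuminating than the paper's explicit computation of $r = c_1(\tilde E)\cdot(\cdots)$. One small caution: your remark that higher differentials vanish ``because $d_r$ changes parity'' should be phrased as a vanishing of targets (for $r\ge 3$ the target $E_r^{p+r,q-r+1}$ has odd base degree $p+r$ when $q\in\{1,2\}$ and $p$ is even, hence is zero); the literal parity-of-total-degree argument fails since every $d_r$ shifts total degree by $1$.
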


\begin{proof}
Let $\mu_{c_1(\tilde{E})}^k:\coh^k((\CP^2)^5;\K) \to\coh^{k+2}((\CP^2)^5;\K)$ denote multiplication by $c_1(\tilde{E})=z_1+z_2+z_3+z_4$. 

Look at the Serre spectral sequence of $\tilde{E}$. Since $\coh^{\mathrm{odd}}((\CP^2)^5;\K)=0$, the only nontrivial differentials are \[d_2:\coh^{2k}((\CP^2)^5;\K)\tensor \coh^1(\Sph^1;\K)\to \coh^{2k+2}((\CP^2)^5;\K)\tensor \coh^0(\Sph^1;\K).\] 
There is a generator $a\in \coh^1(\Sph^1;\K)$ such that  $d_2(a)=c_1(\tilde{E})$. Therefore  
\[\coh^{2k}(\tilde{E};\K)=\coh^{2k}((\CP^2)^5;\K)/\mathrm{Im}(\mu^{2k-2}_{c_1(\tilde{E})})\;\tensor\; \coh^0(\Sph^1;\K)\]
and
\[\coh^{2k+1}(\tilde{E};\K)=\ker(\mu^{2k}_{c_1(\tilde{E})})\tensor \coh^1(\Sph^1;\K).\]

The multiplicative structure is  induced from the  one on $\coh^*((\CP^2)^5;\K)\tensor \coh^*(\Sph^1;\K)$.

A short computation shows that $\mu_{c_1(\tilde{E})}^0$, $\mu_{c_1(\tilde{E})}^2$, $\mu_{c_1(\tilde{E})}^4$ and $\mu_{c_1(\tilde{E})}^6$ are injective. Furthermore, $\mathrm{Im}(\mu_{c_1(\tilde{E})}^0)$, $\mathrm{Im}(\mu_{c_1(\tilde{E})}^2)$, $\mathrm{Im}(\mu_{c_1(\tilde{E})}^4)$ and $\mathrm{Im}(\mu_{c_1(\tilde{E})}^6)$ identify products that contain $z_4$ with linear combinations of products that do not contain it. These identifications form a basis of $\mathrm{Im}(\mu_{c_1(\tilde{E})}^0)$ and $\mathrm{Im}(\mu_{c_1(\tilde{E})}^2)$. Since $z_4^3=0$, in $\mathrm{Im}(\mu_{c_1(\tilde{E})}^4)$ we get an additional relation $r$, namely
\begin{align*}
r&=c_1(\tilde{E})\cdot\tfrac{1}{3}(z_1^2+2 z_1 z_2+2 z_1 z_3-z_1 z_4+z_2^2+2 z_2 z_3-z_2 z_4+z_3^2-z_3 z_4+z_4^2)\\
&=z_1^2 z_2 + z_1^2z_3+z_1z_2^2+2 z_1z_2z_3+z_1 z_3^2+z_2^2z_3+z_2 z_3^2.
\end{align*}

A basis for $\mathrm{Im}(\mu_{c_1(\tilde{E})}^6)$ is given by the elements identifying non-vanishing products that contain $z_4$ with linear combinations of products that do not contain it and the  additional elements $r(z_1+z_2-z_3)$, $r(z_1-z_2+z_3)$ and $r(-z_1+z_2+z_3)$.

As the odd cohomology of $\tilde{E}$ vanishes in small degrees, the same works in degrees $\leq 6$ for $E_\alpha$ viewed as a principal $\Sph^1$-bundle over $\tilde{E}$. If $y_i$ denotes the equivalence class of $z_i$ in $\coh^2(\tilde{E};\K)$, then as above we get that the equivalence classes of $y_1$, $y_2$ and $y_3$ generate $\coh^{\leq 6}(E_\alpha;\K)$ and the additional relation is obtained as

\[ c_1(E_\alpha)\cdot\left(\frac{y_1^2}{\alpha }+2 y_1 y_3-\frac{y_1 y_5}{\alpha }+\alpha  y_3^2-y_3 y_5+\frac{y_5^2}{\alpha }\right)
=3( y_1^2 y_3 + \alpha y_1y_3^2).\qedhere\]
\end{proof}

\begin{lem}\label{CohomologyRingsDifferent}
For $\alpha, \tilde{\alpha}\in \N$ we have that $\coh^{*}(E_\alpha;\K)$ and $\coh^{*}(E_{\tilde{\alpha}};\K)$ are isomorphic if and only if $\alpha=\tilde{\alpha}$.
\end{lem}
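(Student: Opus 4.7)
The strategy is to read off $\alpha$ from a ring-theoretic invariant of $\coh^*(E_\alpha;\K)$, namely the projective variety $Z_\alpha$ of cube-roots of zero in $\coh^2$, together with a cross-ratio extracted from its unique collinear triple of points. First, any isomorphism $\phi\colon \coh^*(E_\alpha;\K) \cong \coh^*(E_{\tilde\alpha};\K)$ restricts to a linear isomorphism $\coh^2(E_\alpha;\K) \cong \coh^2(E_{\tilde\alpha};\K)$ intertwining the cubing maps $v \mapsto v^3 \in \coh^6$: this is immediate when $\phi$ is graded, and remains true in the ungraded case because in the range covered by Lemma~\ref{Lemma22dimensionalCohomologyRings} the cohomology sits in even degrees and is generated in degree~$2$, so both $\coh^2$ and $\coh^6$ are recovered from the augmentation-ideal filtration of the ring. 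Hence $Z_\alpha := \{[v] \in \mathbb{P}(\coh^2(E_\alpha;\K)) : v^3 = 0\}$ is an intrinsic projective invariant.

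Next, I would compute $Z_\alpha$ explicitly. Writing $v = a x_1 + b x_2 + c x_3$, expanding $v^3$ and reducing modulo the relations of Lemma~\ref{Lemma22dimensionalCohomologyRings} turns $v^3 = 0$ into the simultaneous vanishing of five polynomials in $(a,b,c)$, whose common zero locus, obtained by a short case split on whether $a$ or $b$ vanishes, is the set of five distinct points
\[Z_\alpha = \bigl\{[x_1],\; [x_2],\; [x_3],\; [x_1+x_2+x_3],\; [x_1+\alpha x_3]\bigr\}.\]
Evaluating the ten $3 \times 3$ determinants shows that for $\alpha \geq 2$ the only collinear triple in $Z_\alpha$ is $T_\alpha = \{[x_1], [x_3], [x_1+\alpha x_3]\}$, spanning the line $\ell = \mathbb{P}\langle x_1, x_3\rangle$, whereas for $\alpha = 1$ the additional triple $\{[x_2], [x_1+x_2+x_3], [x_1+x_3]\}$ is also collinear. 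The number of collinear triples in $Z_\alpha$ is therefore itself an intrinsic invariant, separating $E_1$ from every $E_{\tilde\alpha}$ with $\tilde\alpha \geq 2$.

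For $\alpha, \tilde\alpha \geq 2$, any ring isomorphism must carry $T_\alpha$ to $T_{\tilde\alpha}$ and in addition carry the intrinsically defined auxiliary point $Q := \ell \cap \mathbb{P}\langle x_2, x_1+x_2+x_3\rangle = [x_1+x_3]$ to its $\tilde\alpha$-counterpart, since $Q$ is the intersection of the two lines determined by the collinear triple and by the remaining two points of $Z_\alpha$. In the affine coordinate $t = c/a$ on $\ell$, the four marked points $[x_1], [x_3], [x_1+\alpha x_3], [x_1+x_3]$ sit at $t = 0, \infty, \alpha, 1$ respectively, so their cross-ratio equals $\alpha$ in a suitable ordering; since the ordering of $T_\alpha$ is determined only up to $S_3$, the invariant extracted is the full anharmonic orbit $\{\alpha, 1/\alpha, 1-\alpha, 1/(1-\alpha), \alpha/(\alpha-1), (\alpha-1)/\alpha\} \subset \K$. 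The last step, and the only one I expect to require genuine care, is a short elementary check that for positive integers $\alpha, \tilde\alpha \geq 2$ the sole member of this orbit belonging to $\N_{\geq 1}$ is $\alpha$ itself: two of the nontrivial expressions lie in $(0,1)$, two are nonpositive, and $\alpha/(\alpha-1) = 1 + 1/(\alpha-1)$ is an integer only at $\alpha = 2$, in which case it equals $\alpha$. Combined with the collinear-triple count for $\alpha = 1$, this forces $\alpha = \tilde\alpha$; the converse is immediate.
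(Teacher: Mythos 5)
Your proof is correct, and it takes a genuinely different and more conceptual route than the paper's. The paper proceeds by direct case analysis: it observes that each $\varphi(x_i)$ must cube to zero, deduces from the explicit $\coh^6$-basis that $\varphi(x_i)$ is a multiple of one of $x_1$, $x_2$, $x_3$, $x_1+x_2+x_3$, or $x_1+\tilde\alpha x_3$, then feeds the two degree-6 relations back in to whittle the possible matrices of $\varphi|_{\coh^2}$ down to six shapes (A)--(F), each split into two subcases, and finally rules out $\alpha\neq\tilde\alpha$ in all twelve---an enumeration the author explicitly flags as ``rather lengthy.'' You instead package the same starting observation (the five points of the ``cube-root-of-zero'' locus $Z_\alpha\subset\mathbb P(\coh^2)$, which are exactly the five lines the paper lists) into an intrinsic projective configuration and then read off $\alpha$ via two $\mathrm{PGL}_3$-invariants: the number of collinear triples (which isolates $\alpha=1$) and, for $\alpha\ge 2$, the cross-ratio on the unique line $\ell=\mathbb P\langle x_1,x_3\rangle$ of the three points of $T_\alpha$ together with the canonical auxiliary point $Q=\ell\cap\overline{P_2P_4}=[x_1+x_3]$. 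The anharmonic-orbit computation (only $\alpha$ itself among $\{\alpha,\,1/\alpha,\,1-\alpha,\,1/(1-\alpha),\,\alpha/(\alpha-1),\,(\alpha-1)/\alpha\}$ is an integer $\ge 2$) then finishes it. I verified $Z_\alpha=\{[x_1],[x_2],[x_3],[x_1+x_2+x_3],[x_1+\alpha x_3]\}$, the collinearity counts, and the integrality check, and all are as you state. What your approach buys is a case-free, geometrically transparent argument in place of the twelve-case elimination; what it costs is a bit of extra setup (identifying $Q$ and invoking the cross-ratio). One minor remark: your aside about recovering the grading from the augmentation-ideal filtration is unnecessary here, since the paper (and the standard convention) takes ``isomorphic cohomology rings'' to mean isomorphic as graded rings, which is all the argument needs.
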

\begin{proof}
Let $\varphi: \coh^*(E_\alpha;\K) \to \coh^*(E_{\tilde{\alpha}};\K)$ be an isomorphism of graded rings. Let $\varphi(x_i)=a_i x_1 + b_i x_2 + c_ix_3$. Then 
\begin{align*}0=\tfrac{1}{3}\varphi(x_i)^3&=(a_i^2 b_i - b_i c_i^2) x_1^2 x_2+ (a_ib_i^2-b_ic_i^2)x_1x_2^2+ 2(a_ib_ic_i-b_ic_i^2)x_1x_2x_3\\
&\qquad +(a_ic_i^2-b_ic_i^2-\tilde{\alpha}(a_i^2c_i-b_ic_i^2)) x_1x_3^2 + (b_i^2c_i-b_ic_i^2)x_2^2x_3.
\end{align*}
As $x_1^2 x_2$, $x_1x_2^2$, $x_1x_2x_3$, $x_1x_3^2$ and $x_2^2x_3$ form a basis of $\coh^6(E_{\tilde{\alpha}};\K)$, the coefficients must vanish. Therefore
\begin{align*}&\varphi(x_i)=a_i x_1,\quad \varphi(x_i)=b_i x_2,\quad \varphi(x_i)=c_i x_3, \quad\varphi(x_i)=b_i(x_1+x_2+x_3)\\&\text{or } \varphi(x_i)=a_i\,x_1 + \tilde{\alpha} \, a_i\, x_3.\end{align*}
Furthermore
\begin{equation}0=\varphi(x_1^2 x_3 + \alpha x_1 x_3^2)=\varphi(x_1)^2 \varphi(x_3) + \alpha\; \varphi(x_1) \varphi(x_3)^2.\label{eqnlastrelation}\end{equation}
Using this it is easy to see that $\varphi(x_1)=a_1 x_1$, $\varphi(x_1)=c_1 x_3$ or $ \varphi(x_1)=a_1\,x_1 + \tilde{\alpha} \, a_1\, x_3$ and  $\varphi(x_3)=a_3 x_1$, $\varphi(x_3)=c_3 x_3$ or $ \varphi(x_3)=a_3\,x_1 + \tilde{\alpha} \, a_3\, x_3$.

Then the matrix representing $\varphi|_{\coh^2(E_\alpha;\K)}:\coh^2(E_\alpha;\K)\to \coh^2(E_{\tilde{\alpha}};\K)$ takes one of the following forms:
\begin{align*} 
&(\mathrm{A}): \left(\begin{array}{c@{\hspace{6pt}}c@{\hspace{6pt}}c}a_1&&0\\0&?&0\\0&&c_3\end{array}\right),\quad &
&(\mathrm{B}): \left(\begin{array}{c@{\hspace{6pt}}c@{\hspace{4pt}}c@{\hspace{4pt}}}a_1&&a_3\\0&?&0\\0&&\tilde{\alpha}a_3\end{array}\right),\quad&
 &(\mathrm{C}):\left(\begin{array}{c@{\hspace{6pt}}c@{\hspace{6pt}}c}0&&a_3\\0&?&0\\c_1&&0\end{array}\right),\\
& (\mathrm{D}):  \left(\begin{array}{c@{\hspace{6pt}}c@{\hspace{4pt}}c@{\hspace{4pt}}}0&&a_3\\0&?&0\\c_1&&\tilde{\alpha} a_3\end{array}\right),\quad &
&(\mathrm{E}):  \left(\begin{array}{@{\hspace{3pt}}c@{\hspace{4pt}}c@{\hspace{6pt}}c}a_1&&a_3\\0&?&0\\\tilde{\alpha}a_1&&0\end{array}\right), \quad&
&(\mathrm{F}): \left(\begin{array}{@{\hspace{3pt}}c@{\hspace{4pt}}c@{\hspace{6pt}}c}a_1&&0\\0&?&0\\\tilde{\alpha}a_1&&c_3\end{array}\right).\end{align*}
Using (\ref{eqnlastrelation}) in these cases we get that
\[(\mathrm{A}): c_3=\frac{\tilde{\alpha}}{\alpha} a_1, \quad (\mathrm{B}): a_1=-\alpha a_3, \quad (\mathrm{C}): c_1=\alpha\tilde{\alpha} a_3,\]
\[(\mathrm{D}): c_1=- \alpha \tilde{\alpha} a_3, \quad (\mathrm{E}):a_1=- \alpha a_3,\quad(\mathrm{F}):a_1=-\frac{\alpha}{\tilde{\alpha}} c_3.\]

As $\varphi$ is an isomorphism, in each of these cases $\varphi(x_2)=b_2 x_2$ or $\varphi(x_2)=b_2(x_1+x_2+x_3)$. This gives us cases (A1), (A2), (B1), \dots, (F2).

Then use 
\begin{align}0&=\varphi(x_1^2 x_2 + x_1^2 x_3 + x_1 x_2^2 + 2 x_1 x_2 x_3 + x_1 x_3^2+ x_2^2 x_3 + x_2 x_3^2)\\
&=\varphi(x_1)^2 \varphi(x_2) + \varphi(x_1)^2 \varphi(x_3) + \varphi(x_1) \varphi(x_2)^2 + 2 \varphi(x_1) \varphi(x_2) \varphi(x_3)\nonumber\\
&\qquad + \varphi(x_1) \varphi(x_3)^2+ \varphi(x_2)^2 \varphi(x_3) + \varphi(x_2) \varphi(x_3)^2\nonumber
\end{align}
to get a contradiction in each of the cases (A1),\dots,(F2) if $\alpha\neq \tilde{\alpha}$. This is an elementary, but due to the number of cases rather lengthy, computation.
\end{proof}

\section{Homogeneous metrics on the 22-dimensional examples}\label{SectionMetrics}
Since $\tilde{G}=(\SU(3))^5$ is semisimple, the usual action of $\tilde{G}$ on $(\CP^2)^5$ lifts to an action on $E_\alpha$  that commutes with the $\T^2$ action (see \cite{Stewart}). As $\tilde{G}$ and $\T^2$ act transitively on base and fibre, respectively, $E_\alpha$ is a homogeneous space of $G=(\SU(3))^5 \times \T^2$. 

A short computation shows that the stabilizer $G_x$ at $x \in E_\alpha$ is of the form $\left\{(x,\rho(x))\middle|x \in \tilde{G}_{\pi(x)}\right\}$, where $\pi:E_\alpha \to (\CP^2)^5$ is the projection and $\rho:\tilde{G}_{\pi(x)} \to \T^2$ is a Lie group homomorphism. In particular, if we put $H=\mathrm{S}(\mathrm{U}(1)\times  \mathrm{U}(2))^5$ and $H_\rho=\left\{(x,\rho(x))\middle|x \in H\right\}$ for a homomorphism $\rho:H \to \T^2$, then $E_\alpha=G/H_\rho$ for some $\rho$. Furthermore, let  $K=(\mathrm{S}(\mathrm{U}(1)\times \mathrm{U}(2)))^5\times \T^2<G$.

Let $\skpr{.}{.}=\skpr{.}{.}_{\tilde{G}} \times \skpr{.}{.}_{\T^2}$ be the biinvariant product metric on $G$, where $\skpr{.}{.}_{\tilde{G}}$ and $\skpr{.}{.}_{\T^2}$ are biinvariant metrics on $\tilde{G}$ and $\T^2$. We denote the Lie algebras of $G$, $K$ and $H_\rho$ with  $\lie{g}$, $\lie{k}$ and $\lie{h}_\rho$, respectively. Let $\lie{m}_1$ be an orthogonal complement of $\lie{k}$ in $\lie{g}$ and $\lie{m}_{2,\rho}$ an orthogonal complement of $\lie{h}_\rho$ in $\lie{k}$. Then define a left invariant metric, which is invariant under the adjoint actions of $H_\rho$ and $K$ on $\lie{g}$, by 
\[\skpr{X}{Y}_t=\skpr{X_{\lie{m}_1}}{Y_{\lie{m}_1}}+t^2 \skpr{X_{\lie{m}_{2,\rho}}}{Y_{\lie{m}_{2,\rho}}}+t^2\skpr{X_{\lie{h}_\rho}}{Y_{\lie{h}_\rho}},\]
where $X,Y\in \lie{g}$ and the subscripts denote orthogonal projection on the respective subspaces. The left invariant metric $\skpr{.}{.}_t$ induces a homogeneous metric $g_t$ on $E_\alpha$. 

The following lemma will allow us to use the main result of \cite{HST13} to conclude that $(M,g_t)$ has almost nonnegative curvature operator as $t\to 0$. 
\begin{lem}\label{metricsareconnectionmetrics}
The homogeneous metrics $g_t$ are connection metrics on $E_\alpha$ with respect to some connection form $\gamma$ and some biinvariant metric $b$ on $\T^2$.
\end{lem}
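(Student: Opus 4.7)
The plan is to realize $E_\alpha=G/H_\rho$ explicitly as a principal $\T^2$-bundle over $(\CP^2)^5=G/K$ whose structure-group action is the right action of the $\T^2$-factor of $G$, and then to exhibit $g_t$ as the sum of a horizontal pullback and a vertical biinvariant piece. Since $H_\rho=\{(h,\rho(h))\colon h\in H\}$ meets $\{e\}\times \T^2$ only at the identity, the right $\T^2$-action on $G/H_\rho$ is free, the fiber through $eH_\rho$ equals $K/H_\rho\cong \T^2$ via $t\mapsto tH_\rho$, and $G/H_\rho\to G/K$ is the bundle projection $\pi$. At $eH_\rho$ the tangent space is $\lie{m}_1\oplus \lie{m}_{2,\rho}$, with $\lie{m}_{2,\rho}$ the vertical space and $\lie{m}_1$ the horizontal space, and these are $g_t$-orthogonal by construction.

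Next I would check that declaring $\lie{m}_1$ horizontal at $eH_\rho$ and extending by left $G$-translation yields a principal connection $\gamma$ on $E_\alpha$. Since $\T^2$ is central in $G=\tilde G\times \T^2$, right multiplication by any $t\in \T^2$ on $G/H_\rho$ coincides with left multiplication by $t$, so the principal $\T^2$-action is a subgroup of the left $G$-action, under which the horizontal distribution is automatically invariant. Hence the distribution is $\T^2$-equivariant and defines a connection one-form with values in the Lie algebra of the fiber.

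It then remains to identify the vertical and horizontal parts of $g_t$. The projection $\lie{k}\to \lie{k}/\lie{h}_\rho\cong \lie{t}^2$ restricted to $\lie{m}_{2,\rho}$ is an isomorphism that, via the orbit map $\T^2\to K/H_\rho$, realizes the vertical differential. Transporting $t^2\skpr{\cdot}{\cdot}|_{\lie{m}_{2,\rho}}$ through this isomorphism yields an inner product on $\lie{t}^2$, which, because $\T^2$ is abelian, automatically integrates to a biinvariant metric $b$ that agrees fiberwise with $g_t$. On the horizontal side, $\skpr{\cdot}{\cdot}|_{\lie{m}_1}$ is $\mathrm{Ad}(K)$-invariant and so descends to a homogeneous metric $g_B$ on $G/K=(\CP^2)^5$, and the horizontal part of $g_t$ equals $\pi^*g_B$. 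Combining these pieces gives $g_t=\pi^*g_B+b\circ(\gamma,\gamma)$, which is exactly a connection metric.

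The hard part will be the equivariance check: a principal connection must be invariant under the structure-group right action, not merely under $G$-left-translation. The key observation that resolves this is the centrality of the $\T^2$-factor in $G$, which collapses left and right translation by $\T^2$ on $G/H_\rho$. Once this is granted, the remaining work is routine book-keeping among the identifications $\lie{m}_{2,\rho}\cong \lie{k}/\lie{h}_\rho\cong \lie{t}^2$.
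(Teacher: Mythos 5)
Your proposal is correct and follows essentially the same route as the paper: both declare $\lie{m}_1$ horizontal and $\lie{m}_{2,\rho}$ vertical at each point via left $G$-translation, identify $\lie{m}_{2,\rho}$ with $\lie{t}^2$ through the orbit map, and transport $t^2\skpr{\cdot}{\cdot}|_{\lie{m}_{2,\rho}}$ to a biinvariant metric on $\T^2$. Your observation that centrality of $\T^2$ in $G$ reduces the $\T^2$-equivariance of the horizontal distribution to left $G$-invariance is exactly the reason the paper's assertion ${R_t}_*\mathcal{H}=\mathcal{H}$ holds, and nicely fills in a step the paper leaves implicit.
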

\begin{proof}
Let $\iota_g=(p\circ l_g)_{*_e}:\lie{m}_1\oplus \lie{m}_{2,\rho} \to \T_{gH_\rho} E_\alpha$, where $p:G \to G/H_\rho=E_\alpha $ is the canonical projection and $l_g$ is left multiplication by $g$. Then $\iota_g$ is an isomorphism.

Note that the vertical subspaces at $gH_\rho\in E_\alpha$ is given by $\iota_g( \lie{m}_{2,\rho})$. Define horizontal subspaces $\mathcal{H}_{gH_\rho}=\iota_g( \lie{m}_1)$ . Then ${R_t}_*\mathcal{H}_{gH_\rho}=\mathcal{H}_{R_t(gH_\rho)}$ for $t\in \T^2$ and the right action $R$ of $\T^2$ on $E_\alpha$, so $\mathcal{H}$ defines a connection form $\gamma$ on $E_\alpha$. Let $\rho_*^*$ be the adjoint of $\rho_*:\lie{s}(\lie{u}(1)\times \lie{u}(2)) \to \lie{t}^2$ with respect to the restriction of $\skpr{.}{.}_{\tilde{G}}$ to $\mathrm{S}(\mathrm{U}(1)\times \mathrm{U}(2))$ and $\skpr{.}{.}_{\T^2}$. Then \[\lie{m}_{2,\rho}=\{(-\rho_*^*v,v)| v \in \lie{t}^2\}\] and 
\[\iota_g(-{\rho_*}^*v,v)=\frac{d}{dt}|_{t=0}\left(gH_\rho.\exp( t (v+\rho_* {\rho_*}^*v))\right).\]
Therefore $\gamma=\tilde{\gamma}\circ\iota_g^{-1} $ with \[\tilde{\gamma}:\lie{m}_1\oplus \lie{m}_{2,\rho} \to \lie{t}^2, \qquad X+(-{\rho_*}^*v,v)\mapsto v+\rho_*{\rho_*}^*v.\] 
Define a biinvariant metric on $\lie{t}^2$ by $b=(\tilde{\gamma}|_{\lie{m}_{2,\rho}}^{-1})^*\skpr{.}{.}$. Then $g_t$  is the connection metric induced by $\gamma$, $t^2 b$ and the normal homogeneous metric on $G/K=(\CP^2)^5$ coming from $\skpr{.}{.}$.
\end{proof}

\section{Curvature of the metrics}
Let $X^t=X_1+\tfrac{1}{t} X_2, Y^t=Y_1 +\tfrac{1}{t} Y_2\in \lie{m}_1\oplus \lie{m}_{2,\rho}$ be orthonormal with respect to $\skpr{.}{.}_t$. Then the sectional curvature with respect to the homogeneous metric $g_t$ is given by (see e.g. \cite[formula (3.32)]{CheegerEbin})
\begin{align*}
\sec_{g_t}(p_* X^t,p_*Y^t)&=\norm{\ad_{X^t}^{*_t}Y^t + \ad_{Y^t}^{*_t}X^t}_t^2-\skpr{\ad_{X^t}^{*_t}X^t}{\ad_{Y^t}^{*_t}Y^t}\\
&\qquad -\frac{3}{4}\norm{[X^t,Y^t]_{\lie{m}_1 \oplus \lie{m}_{2,\rho}}}_t^2\\
&\qquad-\frac{1}{2}\left( \skpr{[[X^t,Y^t],Y^t]}{X^t}_t+\skpr{[[Y^t,X^t],X^t]}{Y^t}_t \right).
\end{align*}

Here $\ad_X^{*_t}$ denotes the adjoint of $\ad_X:\lie{g} \to \lie{g} , \; Y \mapsto [X,Y]$ with respect to $\skpr{.}{.}_t$.

If, for $r>0$, we let $f_r:\lie{g} \to \lie{g}, \; X_{\lie{m}_1}+X_{\lie{m}_{2,\rho}}+X_{\lie{h}_\rho} \mapsto X_{\lie{m}_1}+r (X_{\lie{m}_{2,\rho}}+ X_{\lie{h}_\rho})$, then $\ad_X^{*_t}=f_{\frac{1}{t^2}}\circ \ad_X^{*_1} \circ f_{t^2}$ and $\ad_X^{*_1}=-\ad_X$. The Lie brackets satisfy
$[\lie{m}_1,\lie{m}_{2,\rho}]\subseteq[\lie{m}_1,\lie{k}]\subseteq \lie{m}_1$, $[\lie{k},\lie{k}] \subseteq \lie{k}$, $[\lie{m}_{2,\rho},\lie{m}_{2,\rho}] =\{0\}$ and $[\lie{m}_1,\lie{m}_1] \subseteq\lie{k}$.

Using this, we compute the expressions appearing in the formula for the sectional curvature of  the plane spanned by $X^t$ and $Y^t$.
\begin{lem}With $X^t$ and $Y^t$ as above:\label{differenttermscurvature}
\begin{enumerate}[label=\emph{(\alph*)}]
\item
$\begin{aligned}[t]&\!\norm{\ad_{X^t}^{*_t}Y^t + \ad_{Y^t}^{*_t}X^t}_t^2\\&\qquad=(t-\tfrac{1}{t})^2\Big(\norm{[X_1,Y_2]}^2 + \norm{[X_2,Y_1]}^2-2\skpr{[X_1,Y_2]}{[X_2,Y_1]}\Big)\\\end{aligned}$
\item $\skpr{\ad_{X^t}^{*_t}X^t}{\ad_{Y^t}^{*_t}Y^t}_t=-(t-\frac{1}{t})^2\skpr{[X_1,Y_2]}{[X_2,Y_1]}$
\item$\begin{aligned}[t]&\!\norm{[X^t,Y^t]_{\lie{m}_1 \oplus \lie{m}_{2,\rho}}}_t^2\\&\qquad=t^2\norm{[X_1,Y_1]_{\lie{m}_{2,\rho}}}^2\\&\qquad \qquad+\frac{1}{t^2}\left(\norm{[X_1,Y_2]}^2+\norm{[X_2,Y_1]}^2 + 2 \skpr{[X_1,Y_2]}{[X_2,Y_1]}\right)\end{aligned}$
\item $\begin{aligned}[t]&\!-\frac{1}{2}\left( \skpr{[[X^t,Y^t],Y^t]}{X^t}_t+\skpr{[[Y^t,X^t],X^t]}{Y^t}_t \right)\\&\qquad=\norm{[X_1,Y_1]}^2\\&\qquad\qquad+\frac{1+\frac{1}{t^2}}{2}\left(\norm{[X_1,Y_2]}^2+\norm{[X_2,Y_1]}^2 + 2 \skpr{[X_1,Y_2]}{[X_2,Y_1]}\right)\\
\end{aligned}$
\end{enumerate}
\end{lem}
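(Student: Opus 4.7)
The plan is to expand both $X^t=X_1+\tfrac{1}{t}X_2$ and $Y^t=Y_1+\tfrac{1}{t}Y_2$ throughout using bilinearity of the bracket, and then classify each resulting term according to the decomposition $\lie{g}=\lie{m}_1\oplus \lie{m}_{2,\rho}\oplus\lie{h}_\rho$ via the given bracket inclusions. Two observations make this tractable: first, $\skpr{.}{.}_t$ differs from the biinvariant $\skpr{.}{.}$ only by a factor $t^2$ on $\lie{m}_{2,\rho}\oplus \lie{h}_\rho$, and this decomposition is orthogonal for $\skpr{.}{.}$, so all final expressions reduce to quantities computed with $\skpr{.}{.}$; second, $X^t$ and $Y^t$ have no $\lie{h}_\rho$-component, and $[X_2,Y_2]=0$, which kills exactly the right terms in each case.

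For (a) and (b) I would start from $\ad_{X^t}^{*_t}Y^t=-f_{1/t^2}[X^t,f_{t^2}Y^t]$. Since $f_{t^2}Y^t=Y_1+tY_2$, expanding the bracket and using $[X_2,Y_2]=0$ gives a $\lie{k}$-part $[X_1,Y_1]$ and an $\lie{m}_1$-part $t[X_1,Y_2]+\tfrac{1}{t}[X_2,Y_1]$; applying $f_{1/t^2}$ only rescales the $\lie{k}$-part. Adding the analogous expression for $\ad_{Y^t}^{*_t}X^t$, the $\lie{k}$-contributions cancel and the $\lie{m}_1$-pieces combine with a factor $(t-\tfrac{1}{t})$, whose squared $\lie{m}_1$-norm is (a). For (b) one sets $Y^t=X^t$, obtaining $\ad_{X^t}^{*_t}X^t=(\tfrac{1}{t}-t)[X_1,X_2]\in\lie{m}_1$, and then rewrites $\skpr{[X_1,X_2]}{[Y_1,Y_2]}$ using $\ad$-invariance of $\skpr{.}{.}$ twice together with the Jacobi identity; the term $\skpr{X_2}{[[X_1,Y_1],Y_2]}$ reduces to $-\skpr{[X_2,Y_2]}{[X_1,Y_1]}=0$, leaving only $-\skpr{[X_1,Y_2]}{[X_2,Y_1]}$.

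For (c), the bracket $[X^t,Y^t]=[X_1,Y_1]+\tfrac{1}{t}([X_1,Y_2]+[X_2,Y_1])$; projection onto $\lie{m}_1\oplus\lie{m}_{2,\rho}$ kills the $\lie{h}_\rho$-piece of $[X_1,Y_1]\in\lie{k}$ and leaves the $\lie{m}_1$-part of the cross-terms unchanged; the norm then splits with the $t^2$ weight on $\lie{m}_{2,\rho}$ and the mixed inner product appears from $\|P+Q\|^2$. For (d) the approach is the same, only more involved: write $[[X^t,Y^t],Y^t]$ as four bracket pieces, use $[\lie{k},\lie{m}_1]\subseteq \lie{m}_1$, $[\lie{m}_1,\lie{m}_1]\subseteq\lie{k}$, $[\lie{m}_1,\lie{m}_{2,\rho}]\subseteq\lie{m}_1$ and $[\lie{m}_{2,\rho},\lie{m}_{2,\rho}]=0$ to track components, and pair with $X^t$. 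Because $X^t$ has no $\lie{h}_\rho$-part, all $\lie{h}_\rho$-projections of intermediate terms drop out, and the weight $t^2$ on $\lie{m}_{2,\rho}$ exactly cancels the $\tfrac{1}{t}$-factor in $X^t$'s $\lie{m}_{2,\rho}$-component, leaving clean coefficients $1$ and $\tfrac{1}{t^2}$. The resulting scalar products are then reduced via the cyclic identity $\skpr{[A,B]}{C}=\skpr{[B,C]}{A}$ for biinvariant metrics to norms and inner products of single brackets. Adding the symmetric term and multiplying by $-\tfrac{1}{2}$ yields (d).

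The main obstacle is the bookkeeping in part (d): one must correctly project four nested bracket pieces before pairing, and many reductions via cyclic symmetry produce intermediate terms that only combine cleanly after invoking $[X_2,Y_2]=0$. A convenient sanity check along the way is to verify that the $t=1$ case of all four identities is consistent with the standard biinvariant sectional curvature $\tfrac14\|[X,Y]\|^2$ on $G$.
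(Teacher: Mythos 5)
Your computation is correct, and since the paper states Lemma~\ref{differenttermscurvature} without proof (it is a direct calculation feeding into Proposition~\ref{propNonnegativeSecPositiveRic}), your argument is precisely the kind of verification the author leaves to the reader. The key ingredients you use — $\ad_X^{*_t}=-f_{1/t^2}\circ\ad_X\circ f_{t^2}$, the orthogonality of $\lie{m}_1\oplus\lie{m}_{2,\rho}\oplus\lie{h}_\rho$, the bracket inclusions, $[X_2,Y_2]=0$, and the fact that $X^t,Y^t$ have no $\lie{h}_\rho$-component — are exactly the right ones, and I checked that the scalar products in (b) and (d) reduce as you claim via $\ad$-invariance and Jacobi (e.g.\ $\skpr{[X_1,X_2]}{[Y_1,Y_2]}=-\skpr{[X_1,Y_2]}{[X_2,Y_1]}$ after the $[X_2,Y_2]=0$ term drops out). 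One small remark on your description of (d): the $\lie{m}_{2,\rho}$-pairings acquire a net factor $\tfrac{1}{t}\cdot t^2\cdot\tfrac{1}{t}=1$ (the $\tfrac{1}{t}$ from the coefficient in $[[X^t,Y^t],Y^t]$ times the metric weight $t^2$ times the $\tfrac{1}{t}$ in $X^t$), while the $\tfrac{1}{t^2}$ coefficients arise from the $\lie{m}_1$-valued terms $[[X_i,Y_2],Y_2]$ and $[[X_2,Y_1],Y_2]$ paired with $X_1$; your summary conflates these slightly but the underlying bookkeeping is sound.
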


\begin{prop}\label{propNonnegativeSecPositiveRic}
$(E_\alpha,g_t)$ has nonnegative sectional and positive Ricci curvature for all $t\in (0,1]$.
\end{prop}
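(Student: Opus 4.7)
The plan is to add the four contributions from Lemma~\ref{differenttermscurvature} according to the displayed sectional-curvature formula, regroup the resulting $t$-polynomial so that nonnegativity becomes manifest, and then derive positive Ricci by tracing over an orthonormal basis while ensuring that at least one summand is strictly positive in every direction.

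For the sectional-curvature inequality, the key algebraic move is polarization. I would introduce the nonnegative quantities $u:=\norm{[X_1,Y_2]+[X_2,Y_1]}^{2}$ and $v:=\norm{[X_1,Y_2]-[X_2,Y_1]}^{2}$, which satisfy $\norm{[X_1,Y_2]}^{2}+\norm{[X_2,Y_1]}^{2}=(u+v)/2$ and $2\skpr{[X_1,Y_2]}{[X_2,Y_1]}=(u-v)/2$, and I would split $\norm{[X_1,Y_1]}^{2}=\norm{[X_1,Y_1]_{\lie{m}_{2,\rho}}}^{2}+\norm{[X_1,Y_1]_{\lie{h}_\rho}}^{2}$ using $[\lie{m}_1,\lie{m}_1]\subseteq\lie{k}=\lie{m}_{2,\rho}\oplus\lie{h}_\rho$. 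After collecting coefficients, the numerous $1/t^{2}$-contributions to the $u$-term should cancel completely, leaving the clean form
\[
\sec_{g_t}(p_{*}X^{t},p_{*}Y^{t})=\tfrac{t^{2}}{4}u+\tfrac{3}{4}\!\left(t-\tfrac{1}{t}\right)^{\!2}\!v+\left(1-\tfrac{3}{4}t^{2}\right)\!\norm{[X_1,Y_1]_{\lie{m}_{2,\rho}}}^{2}+\norm{[X_1,Y_1]_{\lie{h}_\rho}}^{2},
\]
which is nonnegative for all $t\in(0,1]$ because $1-\tfrac{3}{4}t^{2}\geq\tfrac{1}{4}$ in this range.

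For positive Ricci, I would extend $X^{t}$ to a $\skpr{.}{.}_{t}$-orthonormal basis of $\lie{m}_{1}\oplus\lie{m}_{2,\rho}$ and write $\Ric_{g_{t}}(p_{*}X^{t},p_{*}X^{t})$ as the sum of the sectional curvatures of the corresponding planes; every summand is nonnegative by the first step, so it suffices to produce one companion vector making one term strictly positive. If the horizontal component $X_{1}$ is nonzero, I would pick $Y=Y_{1}\in\lie{m}_{1}$ with $[X_{1},Y_{1}]\neq 0$: this is possible because $G/K=(\CP^2)^5$ is a product of irreducible symmetric spaces, so the bracket $\lie{m}_{1}\times\lie{m}_{1}\to\lie{k}$ is non-degenerate on each factor, and then one of the last two summands is positive. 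If $X_{1}=0$ and $X_{2}\in\lie{m}_{2,\rho}$ is nonzero, the parametrization $\lie{m}_{2,\rho}=\{(-\rho_{*}^{*}v,v):v\in\lie{t}^{2}\}$ shows that $X_{2}$ acts on $\lie{m}_{1}\subset\lie{s}\lie{u}(3)^{5}$ as $\ad_{-\rho_{*}^{*}v}$; because the two Chern classes defining $E_\alpha$ are linearly independent, $\rho_{*}$ is surjective, hence $\rho_{*}^{*}$ is injective, and $\rho_{*}^{*}v\in\lie{h}$ acts nontrivially on the (almost-effective) isotropy module $\lie{m}_{1}$, yielding $Y_{1}\in\lie{m}_{1}$ with $[X_{2},Y_{1}]\neq 0$, which forces $u>0$.

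The main technical obstacle is the bookkeeping in the four-way sum of (a)--(d): the cancellation of $1/t^{2}$-contributions that collapses the $u$-coefficient to a pure $t^{2}/4$ is delicate and only emerges after rewriting everything uniformly in the polarization basis. The Ricci-positivity case analysis itself is conceptually transparent but rests on the explicit structure of the twist homomorphism $\rho$ via the characterization $\lie{m}_{2,\rho}=\{(-\rho_{*}^{*}v,v):v\in\lie{t}^{2}\}$ and on faithfulness of the isotropy representations involved.
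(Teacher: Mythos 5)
Your rewritten sectional-curvature formula is correct: summing (a)--(d) from Lemma~\ref{differenttermscurvature} with the weights of the displayed curvature formula, and substituting $\norm{[X_1,Y_2]}^2+\norm{[X_2,Y_1]}^2=\tfrac{1}{2}(u+v)$ and $\skpr{[X_1,Y_2]}{[X_2,Y_1]}=\tfrac{1}{4}(u-v)$, indeed collapses the $\tfrac{1}{t^2}$-terms in the $u$-coefficient to $\tfrac{t^2}{4}$, and separating $\norm{[X_1,Y_1]}^2$ into $\lie{m}_{2,\rho}$- and $\lie{h}_\rho$-components yields exactly
\[
\sec_{g_t}(p_{*}X^{t},p_{*}Y^{t})=\tfrac{t^{2}}{4}u+\tfrac{3}{4}\left(t-\tfrac{1}{t}\right)^{2}v+\left(1-\tfrac{3}{4}t^{2}\right)\norm{[X_1,Y_1]_{\lie{m}_{2,\rho}}}^{2}+\norm{[X_1,Y_1]_{\lie{h}_\rho}}^{2},
\]
which matches the paper's expression $\norm{[X_1,Y_1]}^2-\tfrac{3}{4}t^2\norm{[X_1,Y_1]_{\lie{m}_{2,\rho}}}^2+(t^2+\tfrac{3}{4t^2}-\tfrac{3}{2})v+t^2\skpr{[X_1,Y_2]}{[X_2,Y_1]}$ and is manifestly nonnegative on $(0,1]$.

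Your route is genuinely different from the paper's on both counts, and both work. For nonnegativity of $\sec$, the paper does not present the curvature as a visibly nonnegative sum; instead it evaluates at $t=1$, uses normal homogeneity to derive the inequality $\skpr{[X_1,Y_2]}{[X_2,Y_1]}\geq -\norm{[X_1,Y_1]}^2+\tfrac{3}{4}\norm{[X_1,Y_1]_{\lie{m}_{2,\rho}}}^2-\tfrac{1}{4}v$, and substitutes this back in. Your polarization argument is more transparent and self-contained: it bypasses the appeal to normal homogeneity at $t=1$ and makes nonnegativity structural rather than comparative, at the modest cost of one extra algebraic rearrangement. For positivity of $\Ric$, the paper simply cites Berestovskij's theorem for the normal homogeneous metric $g_1$ and says to ``apply the same trick,'' i.e.\ to transport positivity from $t=1$ to general $t$ in the same way; your argument instead produces, for every nonzero tangent direction, an explicit companion direction with strictly positive sectional curvature, using that the isotropy representation of $\lie{h}$ on $\lie{m}_1=\bigoplus_i\lie{m}_1^{(i)}$ is faithful and that $\rho_*:\lie{h}\to\lie{t}^2$ is surjective (equivalently $\rho_*^*$ is injective), which follows from simple connectivity of $E_\alpha$ (equivalently linear independence of the two Chern classes). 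This direct verification is longer but avoids the external citation and actually gives slightly more information about which planes carry positive curvature. One small point worth tightening: when $X_1\neq 0$ you must also arrange $Y_1\perp_{g_t} X^t$, but since $Y_1\in\lie{m}_1$ implies $\skpr{Y_1}{X^t}_t=\skpr{Y_1}{X_1}$, you may replace $Y_1$ by $Y_1-\tfrac{\skpr{Y_1}{X_1}}{\norm{X_1}^2}X_1$ without changing $[X_1,Y_1]$, so the argument goes through.
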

\begin{proof}
Using Lemma \ref{differenttermscurvature} one  computes
\begin{align*}\sec_{g_t}(p_*X^t,p_*Y^t)&=\norm{[X_1,Y_1]}^2-\tfrac{3}{4}t^2 \norm{[X_1,Y_1]_{\lie{m}_{2.\rho}}}^2\\
&\qquad+\left(t^2+\tfrac{3}{4 t^2}-\tfrac{3}{2}\right)\norm{[X_1,Y_2]-[X_2,Y_1]}^2\\
&\qquad +t^2 \skpr{[X_1,Y_2]}{[X_2,Y_1]}.
\end{align*}
For $t=1$ this expression is nonnegative, since $g_1$ is normal homogeneous. We get
\[\skpr{[X_1,Y_2]}{[X_2,Y_1]}\geq -\norm{[X_1,Y_1]}^2+\tfrac{3}{4}\norm{[X_1,Y_1]_{\lie{m}_{2.\rho}}}^2-\tfrac{1}{4}\norm{[X_1,Y_2]-[X_2,Y_1]}^2\]
and therefore 
\begin{align*}\sec_{g_t}(p_* X^t,p_*Y^t)&\geq (1-t^2)\norm{[X_1,Y_1]}^2 + \left(\tfrac{3}{4}(t^2+\tfrac{1}{ t^2})-\tfrac{3}{2}\right)\norm{[X_1,Y_2]-[X_2,Y_1]}^2\\
&\geq 0.\end{align*}

To see that $(E_\alpha, g_t)$ has positive Ricci curvature, use that the normal homogeneous metric $g_1$ has positive Ricci curvature by Berestovskij \cite{Beres95} and apply the same trick as above.

\end{proof}

\section{Proof of Theorem \ref{MainTheoremANCO}}\label{SectionProof}
To get the conclusion about the metrics $g_t$ in dimension 22 we just combine Lemma \ref{CohomologyRingsDifferent} and Proposition \ref{propNonnegativeSecPositiveRic} and use the main result of \cite{HST13}, which we can use due to Lemma \ref{metricsareconnectionmetrics}, to get almost nonnegative curvature operator. For the diameter we have $\diam(E_\alpha,g_t)\leq \diam(G,\skpr{.}{.}_1)$.

To get the additional upper bound for the sectional curvature if $t=1$, note that 
\[\sec_{(M,g_1)}(p_*X,p_*Y)=\tfrac{1}{4}\norm{[X,Y]_{\lie{m}_1\oplus\lie{m}_{2,\rho}}}^2+\norm{[X,Y]_{\lie{h}_\rho}}^2,\]
so that
\[\sec_{(M,g_1)}\leq \max_{\substack{X,Y\in \lie{g}\\X,Y \text{ orthonormal}}}\norm{[X,Y]}^2.\]

If $n\geq3$, the same proof as above shows, that $\coh^*(E_\alpha\times \Sph^n;\K)$ and $\coh^*(E_{\tilde{\alpha}}\times \Sph^n;\K)$ are isomorphic if and only if $\alpha=\tilde{\alpha}$. For $n=2$ use that an isomorphism $\varphi:\coh^*(E_\alpha\times \Sph^2;\K)\to \coh^*(E_{\tilde{\alpha}}\times \Sph^2;\K)$ maps the generator of $z\in \coh^*(E_\alpha\times \Sph^2;\K)$ corresponding to $\Sph^2$ to a multiple of $z \in \coh^*(E_{\tilde{\alpha}}\times \Sph^2;\K)$, since this are the only elements whose squares vanish. For the other generators $x_i$ we have that $\varphi(x_i)$ is a linear combination of the $x_i$, since $\varphi(x_i)^3=0$ and $\varphi(x_i) \notin \K \cdot z$. 

\section{The 13-dimensional family}\label{Section13dimensional}

As in the  proof of Theorem \ref{MainTheoremANCO}, the case $n\geq 15$ in Theorem \ref{MainTheorem13dimensional} follows by considering products of the 13-dimensional family with spheres of appropriate dimension. The proof for the 13-dimensional family is similar, but easier than the one for the 22-dimensional family, since we do not have to deal with a family of metrics. The examples we consider are the following:

Let $B=(\SU(3)/\T^2)\times (\SU(3)/\T^2)$. Then by Borel \cite{Borel}
\[\coh^*(B;\Z)=\Z[x_1, y_1, x_2,y_2]/I,\]
where $I$ is the ideal generated by $x_1^2+x_1 y_1+y_1^2$, $x_1^2 y_1+x_1 y_1^2$, $x_2^2+x_2 y_2+y_2^2$ and $x_2^2 y_2+x_2 y_2^2$. For $a\in \Z$ let $M_a$ be the principal $\Sph^1$-bundle over $B$ with first Chern class $c_1(M_a)=a x_1+ y_1 -y_2$. Again by \cite{Stewart}, $M_a$ is a homogeneous space of $\SU(3)\times \SU(3)\times \Sph^1$.

Fix a biinvariant metric on $\SU(3)\times \SU(3)\times \Sph^1$. If we equip all $M_a$ with the induced metric, we get as in the proof of Theorem \ref{MainTheoremANCO} in Section \ref{SectionProof}, that $0\leq \sec_{M_a}\leq C$ for some constant independent of $a$ and $\diam M_a\leq \diam(\SU(3)\times \SU(3)\times \Sph^1)$.  Therefore, for the proof of Theorem \ref{MainTheorem13dimensional} we are left with showing that among the complex cohomology rings of the $M_a$, $ a \in \Z$, there are infinitely many which are pairwise non-isomorphic.

\begin{lem}\label{LemmaCohomologyRingMa}
The cohomology ring $\coh^{\leq 6}(M_a;\C)$ in degrees $\leq 6$ is generated by $x_1$, $y_1$ and $x_2$ with only one additional relation, namely $r_a=(a^2-1) x_1^2 +(2a-1)x_1y_1 +x_2^2 + a x_1 x_2 + y_1 x_2$.
\end{lem}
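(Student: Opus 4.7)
The plan is to mimic the approach of Lemma \ref{Lemma22dimensionalCohomologyRings}: apply the Serre spectral sequence of the principal circle bundle $\Sph^1 \to M_a \to B$. Since $\coh^{\mathrm{odd}}(B;\C)=0$ by Borel's presentation, the only nontrivial differential is $d_2$, which with respect to a suitable generator $\sigma\in \coh^1(\Sph^1;\C)$ satisfies $d_2(\sigma)=c_1(M_a)=ax_1+y_1-y_2$. For each even $k\leq 6$ this gives
\[ \coh^{2k}(M_a;\C)\cong \coh^{2k}(B;\C)/\mathrm{Im}\bigl(\mu^{2k-2}_{c_1(M_a)}\bigr), \]
and $\coh^{\mathrm{odd}}(M_a;\C)=0$ in this range. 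Thus $\coh^{\leq 6}(M_a;\C)$ is obtained from $\coh^*(B;\C)$ by imposing $c_1(M_a)=0$ in degree~$2$.

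In degree $2$ the relation $c_1(M_a)=0$ lets us eliminate $y_2=ax_1+y_1$, so $x_1, y_1, x_2$ generate the ring. Substituting $y_2=ax_1+y_1$ into Borel's four defining relations for $\coh^*(B;\C)$ and simplifying via $y_1^2=-x_1^2-x_1y_1$ yields: (i) the first-factor relations $x_1^2+x_1y_1+y_1^2=0$ and $x_1^3=0$ (the latter equivalent to $x_1^2 y_1+x_1 y_1^2=0$ modulo the former); (ii) from the second-factor quadratic relation $x_2^2+x_2y_2+y_2^2=0$ precisely the claimed $r_a=0$; and (iii) from the second-factor sextic $x_2^2y_2+x_2y_2^2=0$, via the identity $x_2(x_2^2+x_2y_2+y_2^2)-(x_2^2y_2+x_2y_2^2)=x_2^3$, the relation $x_2^3=0$ modulo $r_a$. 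Hence once the ``flag-type'' relations $x_1^2+x_1y_1+y_1^2,\; x_1^3,\; x_2^3$ are in place, the only additional relation introduced by the bundle structure is $r_a$.

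To close the argument one checks that this presentation has the dimensions in degrees $\leq 6$ predicted by the Gysin sequence. Concretely, $\C[x_1,y_1,x_2]/(x_1^2+x_1y_1+y_1^2,\, x_1^3,\, x_2^3)\cong \coh^*(\SU(3)/\T^2)\otimes \C[x_2]/(x_2^3)$ has dimensions $1,3,5,5$ in degrees $0,2,4,6$; imposing $r_a=0$ removes one element in degree~$4$ and the degree-$6$ multiples $r_ax_1,\, r_ay_1,\, r_ax_2$, a short linear algebra computation in the explicit degree-$6$ basis of the truncated flag ring then showing that these three multiples span a subspace of the expected dimension. Comparison with the Betti numbers of $M_a$ computed from the Gysin sequence (or Poincar\'e duality, using that $M_a$ is closed, simply connected, and $13$-dimensional) confirms that no further relations appear in this range. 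This final dimension check is the main piece of bookkeeping and the only real obstacle.
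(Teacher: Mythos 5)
Your proof takes the same Serre/Gysin spectral sequence route the paper alludes to when it calls the proof ``similar to that of Lemma~\ref{Lemma22dimensionalCohomologyRings}'', and the substitutions producing $r_a$ and $x_2^3$ are correct. The dimension check you flag as the remaining obstacle does work out and can in fact be avoided: once one observes $\coh^{\mathrm{odd}}(M_a;\C)=0$ in degrees $\le 5$ (i.e.\ that cup product with $c_1(M_a)$ is injective on $\coh^0,\coh^2,\coh^4(B;\C)$), the Gysin sequence identifies $\coh^{\le 6}(M_a;\C)$ as a graded ring with $\bigl(\coh^*(B;\C)/(c_1(M_a))\bigr)^{\le 6}$, so the substitution $y_2\mapsto ax_1+y_1$ into Borel's relations already yields the complete presentation with no residual bookkeeping.
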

The proof of Lemma \ref{LemmaCohomologyRingMa} is similar to that of Lemma \ref{Lemma22dimensionalCohomologyRings} and we omit it here. Note that the cohomology ring $\coh^*(M_a;\C)$ is  determined by the ring structure of $\coh^{\leq 6}(M_a;\C)$ via Poincaré duality.
\begin{lem}
Let $\omega \in \coh^2(M_a;\C)$. Then $\omega^3=0$, if and only if  
\begin{itemize}
\item in the case $a\neq 0,1$, we have that for some $\lambda \in \C$: $\omega = \lambda x_1$, $\omega = \lambda y_1$ , $\omega = \lambda x_2$  or $\omega = \lambda (x_1+y_1)$
\item in the case $a=0$, we have that for some $\lambda \in \C$: $\omega = \lambda x_1$, $\omega = \lambda y_1$ , $\omega = \lambda x_2$ , $\omega = \lambda (x_1+y_1)$ or $\omega= \lambda (y_1+x_2)$;
\item in the case $a=1$, we have that for some $\lambda \in \C$: $\omega = \lambda x_1$, $\omega = \lambda y_1$ , $\omega = \lambda x_2$ , $\omega = \lambda (x_1+y_1)$ or $\omega= \lambda (x_1+y_1+x_2)$.
\end{itemize}
\end{lem}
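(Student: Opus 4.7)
The plan is to expand $\omega = \alpha x_1 + \beta y_1 + \gamma x_2$ in the generators given by Lemma~\ref{LemmaCohomologyRingMa} and determine for which triples $(\alpha,\beta,\gamma)$ the cube $\omega^3$ vanishes in $\coh^6(M_a;\C)$. The first step is to fix a spanning set of $\coh^6(M_a;\C)$ and express $\omega^3$ in it. Using the standard flag-factor reductions $y_1^2 = -x_1^2 - x_1 y_1$, $x_1 y_1^2 = -x_1^2 y_1$, $x_1^3 = y_1^3 = 0$, together with the formula for $x_2^2$ coming from $r_a = 0$ (which iteratively reduces $x_1 x_2^2$, $y_1 x_2^2$, and $x_2^3$), every degree-six monomial in $\{x_1,y_1,x_2\}$ reduces to a $\C$-linear combination of a fixed small set. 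Expanding $(\alpha x_1 + \beta y_1 + \gamma x_2)^3$ and collecting coefficients writes
\[
\omega^3 \;=\; P(\alpha,\beta,\gamma;a)\, v_1 + Q(\alpha,\beta,\gamma;a)\, v_2 + R(\alpha,\beta,\gamma;a)\, v_3
\]
for explicit cubic polynomials $P,Q,R$ in $\alpha,\beta,\gamma$ whose coefficients depend polynomially on $a$, so $\omega^3 = 0$ becomes the simultaneous system $P=Q=R=0$.

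Since both $Q$ and $R$ carry $\gamma$ as a common factor, I would split on $\gamma$. For $\gamma = 0$ the remaining equation $P = 0$ collapses to the classical cubic-vanishing condition $\alpha\beta(\alpha - \beta)=0$ inside one factor $\coh^*(\SU(3)/\T^2)$, producing the three lines $\omega \in \{\lambda x_1,\,\lambda y_1,\,\lambda(x_1+y_1)\}$. For $\gamma \ne 0$ with $\alpha = \beta = 0$ we obtain $\omega = \lambda x_2$ at once (the relevant vanishing traces back to $x_2^3$ reducing to $0$ via $r_a$ together with the flag-factor relations). In the remaining subcase $\gamma \ne 0$, $(\alpha,\beta)\ne (0,0)$, the value of $a$ becomes decisive: the coefficient $a(2-a)$ appearing in $P$ singles out $a=0$, which together with $Q=0$ under $\alpha = 0$ yields the extra line $\omega = \lambda(y_1+x_2)$; and for $a = 1$ both $P$ and $Q$ acquire a common factor $(\alpha - \beta)$ whose vanishing, combined with $R = 0$, forces $\alpha = \beta = \gamma$ and produces $\omega = \lambda(x_1 + y_1 + x_2)$.

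The main obstacle is the negative statement for the generic subcase: for every integer $a \notin \{0,1\}$ one must verify that $P = Q = R = 0$ admits no solution with $\gamma \ne 0$ and $(\alpha,\beta) \ne (0,0)$. Conceptually this is a resultant computation---eliminate $\alpha$ between $Q$ and $R$, substitute into $P$, and check that the resulting one-variable polynomial in $\beta/\gamma$ with $a$-dependent coefficients has no common root compatible with all three equations. In practice the difficulty is purely bookkeeping: the algebra is routine but the $a$-dependent expressions proliferate and must be tracked carefully. The converse direction, that each listed $\omega$ actually satisfies $\omega^3 = 0$, is a direct substitution check using the cubic-vanishing property of the three special classes in $\coh^*(\SU(3)/\T^2)$ and the relation $x_2^3 = 0$.
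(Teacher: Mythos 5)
Your overall plan is the same as the paper's: write $\omega = \alpha x_1 + \beta y_1 + \gamma x_2$, express $\omega^3$ in the basis $x_1^2 y_1,\, x_1^2 x_2,\, x_1 y_1 x_2$ of $\coh^6(M_a;\C)$ to obtain three cubic equations in $(\alpha,\beta,\gamma)$, then split on $\gamma=0$ versus $\gamma\neq 0$. However, the part you defer as ``routine bookkeeping'' is precisely where the argument lives, and your sketch of it has gaps.

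For $\gamma\neq 0$ the paper makes the elimination completely explicit and the factorization does the work: normalize $\gamma=1$, solve the third equation for $\alpha$ to get $\alpha = \beta\,\tfrac{\beta-1+a}{2\beta-1}$ (after checking that $2\beta-1\neq 0$ forces $a\notin\Z$, so this never degenerates), and substitute into the second equation. The result factors cleanly as $-\beta(\beta-1)\bigl(3\beta^2-3\beta+a^2-a+1\bigr)=0$. The factor $\beta=0$ gives $\alpha=0$ and $\omega=\lambda x_2$; the factor $\beta=1$ gives $\alpha=a$, and then the first equation evaluates to a multiple of $a(a-1)$, which is what forces $a\in\{0,1\}$ and produces \emph{both} extra lines at once, namely $\omega=\lambda(a x_1+y_1+x_2)$; the remaining quadratic factor has discriminant $-3(2a-1)^2$, and feeding its roots back into the first equation yields no integer $a$. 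Your account instead treats $a=0$ and $a=1$ by ad hoc observations: you note that $P$ contains $a(2-a)$ after setting $\alpha=0$, and that $a=1$ gives a common factor $(\alpha-\beta)$ in $P$ and $Q$. Neither observation is wrong, but neither constitutes a derivation --- you have not justified why $\alpha=0$ may be assumed in the first subcase (in fact it cannot be assumed a priori; it emerges from the factorization), the factor $a(2-a)$ would equally permit $a=2$ unless the other equations are used, and the $(\alpha-\beta)$ factorization for $a=1$ leaves the branch $\alpha\beta-\gamma^2=0,\ \alpha+\beta-\gamma=0$ unexamined. Most seriously, the nonexistence for all integers $a\notin\{0,1\}$ is simply asserted to be a resultant computation that ``must be tracked carefully,'' which is not a proof. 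To close the gap you need to actually carry out the substitution of $\alpha$ from the third equation into the second, exhibit the factorization, and check each factor against the first equation, as the paper does.
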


\begin{proof}
Let $\omega= \alpha x_1 +\beta y_1 + \gamma x_2$. The coefficients of $\omega^3$ in the basis $x_1^2y_1$, $ x_1^2x_2$, $x_1 y_1 x_2$ of $\coh^6(M_a;\C)$ then give 
\begin{align}
0&= \alpha ^2 \beta - \alpha  \beta ^2+ \alpha  \gamma ^2-2 a \alpha  \gamma ^2+2 a \beta  \gamma ^2- a^2 \beta  \gamma ^2 \label{omegacubedequ1}\\
0&=\gamma ( \alpha ^2  - \beta ^2  - a \alpha  \gamma + \beta  \gamma)\label{omegacubedequ2}\\
0&= \gamma(2 \alpha  \beta   - \beta ^2  - \alpha  \gamma+ \beta  \gamma- a \beta  \gamma ).\label{omegacubedequ3}
\end{align}
If $\gamma=0$, then Equation \ref{omegacubedequ1} becomes $\alpha^2 \beta =\alpha\beta^2$, so $\alpha=0$, $\beta=0$ or $\alpha=\beta$.

If $\gamma\neq 0$, we can assume $\gamma=1$ since $\gamma^{-1} \omega$ also cubes to zero. 
Then Equation \ref{omegacubedequ3} gives $\alpha=\beta\frac{\beta-1+a}{2 \beta -1}$. Note that because of (\ref{omegacubedequ3}) and $a\in\Z$, we get $2\beta-1\neq 0$. Substituting this  for $\alpha$ in (\ref{omegacubedequ2}) gives 
\[0=-\beta (\beta-1)(3 \beta^2 -3 \beta +a^2-a+1).\]
If $\beta=0$, then also $\alpha =0$. If $\beta=1$, then $\alpha=a$ and (\ref{omegacubedequ1}) gives $a=0$ or $a=1$. These are exactly the possibilities for $\omega$ in the lemma.

The last case is, that $0=3 \beta^2 -3 \beta +a^2-a+1$. In this case (\ref{omegacubedequ1}) gives  equations for $a$, that do not have roots in the integers. 
\end{proof}
 The preceding lemma gives strong restriction on the form of a possible isomorphism and allows us to prove the following proposition.
\begin{prop}\label{Prop13dimensionalCohomologyRingsNonIsomorphic}
Let $a,b \in \Z$, $a,b\geq 2$. Suppose that $\varphi:\coh^*(M_b;\C) \to \coh^*(M_a;\C)$ is an isomorphism. Then $a=b$.
\end{prop}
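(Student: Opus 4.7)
The strategy is to leverage the preceding lemma, which sharply restricts where $\varphi$ can send the degree-$2$ generators, and then to transport the single quartic relation $r_b$ through $\varphi$ into $\coh^4(M_a;\C)$.

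Since $a,b\geq 2$, the preceding lemma says the locus $\{\omega\in\coh^2(M_c;\C):\omega^3=0\}$ consists of exactly four lines, namely the scalar multiples of $x_1$, $y_1$, $x_2$ and $x_1+y_1$, for both $c=a$ and $c=b$. Any graded ring isomorphism $\varphi$ must map this locus bijectively onto itself, so each of $\varphi(x_1),\varphi(y_1),\varphi(x_2)$ is a nonzero scalar multiple of one of $x_1,y_1,x_2,x_1+y_1$ in $\coh^2(M_a;\C)$. Because $\dim\coh^2(M_a;\C)=3$ and $\varphi$ is an isomorphism, these three images must be linearly independent; as $\{x_1,y_1,x_1+y_1\}$ is dependent, the unordered set of ``directions'' chosen is one of $\{x_1,y_1,x_2\}$, $\{x_1,x_1+y_1,x_2\}$, or $\{y_1,x_1+y_1,x_2\}$. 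Combined with the six orderings of $\varphi(x_1),\varphi(y_1),\varphi(x_2)$, this leaves eighteen configurations to check.

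In each configuration I would expand $\varphi(r_b)=0$ in the basis $\{x_1^2,\,x_1y_1,\,x_1x_2,\,y_1x_2\}$ of $\coh^4(M_a;\C)$, obtained from $y_1^2=-x_1^2-x_1y_1$ and the reduction $x_2^2=-(a^2-1)x_1^2-(2a-1)x_1y_1-ax_1x_2-y_1x_2$ coming from $r_a=0$, and demand that all four coefficients vanish. The diagonal configuration $\varphi(x_1)=\lambda x_1,\ \varphi(y_1)=\mu y_1,\ \varphi(x_2)=\nu x_2$ survives: the $y_1x_2$-coefficient yields $\mu=\nu$, the $x_1x_2$-coefficient yields $b\lambda=a\nu$, and the $x_1y_1$-coefficient then collapses to $a(2b-1)=b(2a-1)$, i.e.\ $a=b$. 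The other seventeen configurations either force one of the multipliers to vanish (contradicting invertibility of $\varphi$), or yield incompatible integer identities; for instance, configurations of ``transpose'' type typically produce $a^2=b^2$ together with a cubic in $a,b$ (such as $b(3b^2-4b+2)=0$ after substituting $a=b$) having no roots with $a,b\geq 2$.

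The main obstacle is not any single computation but the scale of the case analysis. What keeps it tractable is that the $y_1x_2$-coefficient of $\varphi(r_b)$ is almost always the first to produce a useful constraint, either fixing one scalar in terms of another or immediately forcing a contradiction, so each non-diagonal case collapses after one or two lines. The remaining bookkeeping is elementary but must be carried out carefully to ensure no ``accidental'' non-diagonal case permits $a\neq b$.
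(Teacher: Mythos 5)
Your proposal follows the paper's strategy almost exactly: use the preceding lemma to pin the images of the degree-two generators onto the four cube-vanishing lines, then transport the quartic relation through $\varphi$ and force all four coefficients in the basis $\{x_1^2, x_1y_1, x_1x_2, y_1x_2\}$ of $\coh^4(M_a;\C)$ to vanish. The computation in your diagonal case is correct and matches the paper's conclusion $a=b$.

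The one useful observation you miss, which the paper exploits, is that $x_1+y_1$ itself has vanishing cube in $\coh^2(M_b;\C)$, so $\bigl(\varphi(x_1)+\varphi(y_1)\bigr)^3=0$ as well. Since a nontrivial combination $\lambda x_2 + \mu v$ with $v\in\{x_1,y_1,x_1+y_1\}$ and $\lambda\mu\neq 0$ is never on any of the four lines, this forces $\varphi(x_1),\varphi(y_1)\in\C x_1\oplus\C y_1$ and hence $\varphi(x_2)\in\C x_2$ right away. It also pins down the ratio between the two scalars on $\varphi(x_1)$ and $\varphi(y_1)$ (e.g.\ if $\varphi(x_1)=\lambda(x_1+y_1)$ then $\varphi(y_1)=-\lambda x_1$ or $-\lambda y_1$). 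That cuts your eighteen configurations down to the paper's six, each already carrying a fixed scalar relation, which makes the final $\varphi(r_b)=0$ check considerably shorter. Your route still closes, as the $\varphi(r_b)=0$ conditions do kill the twelve extra cases (for instance, in the case $\varphi(x_1)=\lambda x_2$, $\varphi(y_1)=\mu x_1$, $\varphi(x_2)=\nu y_1$ one is led to $a^2b^2=1$, impossible for $a,b\geq 2$), but you are carrying extra free scalars and cases that the sum-cube observation would have removed for free. Also note a small slip in your "for instance": the transpose configuration actually yields $a^2b^2=1$, not $a^2=b^2$, though this does not affect the soundness of your argument.
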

\begin{proof}
Since $\varphi$ is a ring isomorphism, it satisfies  $\omega^3=0$ if and only if  $\varphi(\omega)^3=0$. Therefore, the  generators $x_1, y_1, x_2\in \coh^2(M_b;\C)$ are mapped to multiples of $x_1, y_1, x_2$ or $ x_1+y_1\in \coh^2(M_a;\C)$. Since $(\varphi(x_1)+\varphi(y_1))^3=0$, we know that $\varphi(x_1), \varphi(y_1) \in \C x_1 \oplus \C y_1 \subset \coh^2(M_a;\C)$. Therefore, $\varphi(x_2)=\mu x_2$ for some $\mu \in \C\setminus\{0\}$. There are six possibilities for the images of $x_1$ and $y_1$. The matrix representing $\varphi|_{\coh^2(M_b;\C)}$ in the basis $x_1$, $y_1$, $x_2$ is given by
\[\begin{pmatrix}\lambda&&\\&\lambda&\\&&\mu\\\end{pmatrix},
\begin{pmatrix}&\lambda&\\\lambda&&\\&&\mu\\\end{pmatrix},
\begin{pmatrix}\lambda &-\lambda&\\\lambda&&\\&&\mu\\ \end{pmatrix}, \]
\[\begin{pmatrix}\lambda&&\\\lambda&-\lambda&\\&&\mu\\ \end{pmatrix}, 
\begin{pmatrix}-\lambda&\lambda&\\&\lambda&\\&&\mu\\\end{pmatrix} \text{ or } 
\begin{pmatrix}&\lambda&\\-\lambda&\lambda&\\&&\mu\\\end{pmatrix}\]
for some $ \lambda \in \C\setminus \{0\}$. Furthermore, $\varphi$ has to satisfy 
\[0=\varphi(r_a)=\varphi\big((a^2-1) x_1^2 +(2a-1)x_1y_1 +x_2^2 + a x_1 x_2 + y_1 x_2\big).\]
Using the coefficients of $\varphi(r_a)$ in the basis $x_1^2$, $x_1 y_1$, $x_1x_2$, $y_1x_2$ of $\coh^4(M_b;\C)$, in each of the above cases one sees that either $a=b$  or there is no isomorphism of this form with $a,b\geq2$.
\end{proof}
Together with the discussion of the curvature of  $M_a$ Proposition \ref{Prop13dimensionalCohomologyRingsNonIsomorphic} concludes the proof of Theorem \ref{MainTheorem13dimensional}.

\section{Further remarks and questions}\label{SectionQuestions}
Totaro's six- and seven-dimensional examples are optimal in terms of the dimension. One might ask what is the optimal dimension if one restricts to homogeneous spaces.  
\begin{question}
What is lowest dimension in which homogeneous counterexamples to Grove's question (with an upper curvature bound) exist?
\end{question}

As mentioned in the introduction, Stephan Klaus has classified all closed, simply connected, homogeneous spaces in dimensions $\leq9$. In dimensions $\leq 6$ and in dimension 8 there are only finitely many diffeomorphism types of homogeneous spaces. Using other results from \cite{MHDiss} written up in \cite{MH14remild} one sees that in the missing  dimensions 7 and 9 there are only finitely many rational homotopy types of closed, simply connected, homogeneous spaces. Therefore the optimal dimension $n$ satisfies $10\leq n\leq 13$.

Relaxing the symmetry assumption one might also ask the same question for cohomogeneity one manifolds. Recently, Anand Dessai \cite{Dessai15} has found an infinite sequence of eight-dimensional cohomogeneity one manifolds of nonnegative curvature with pairwise non-isomorphic complex cohomology rings. This is the minimal dimension in which such cohomogeneity one examples can exist.  
\begin{question}
What is the lowest possible dimension in which nonnegatively curved, cohomogeneity one counterexamples  to Grove's question with an additional uniform upper curvature bound exist?\end{question}
 Here the optimal dimension $n$ satisfies $8\leq n \leq 13$. 

Dropping all symmetry assumptions one might still ask about the minimal dimension in which an infinite family of nonnegatively curved counterexamples with a uniform upper curvature and diameter bounds exist. By the examples of Totaro the minimal dimension is at most 9 and by a result of Tuschmann (see \cite{Tuschmann02}) greater or equal 7. Note however, that due to the classification of closed, simply connected, rationally elliptic manifolds in dimension 7 (see \cite{MH14remild}), closed, simply connected, nonnegatively curved 7-manifolds can only have finitely many different complex homotopy types, if the Bott conjecture holds.

\bibliographystyle{amsalpha}
\bibliography{refs}{}
\end{document}